\title{\Large Upper bounds for the achromatic and coloring numbers of a graph\thanks {Research supported by NSFC (No. 11161046)}}
\author{ {Baoyindureng Wu \footnote{
Corresponding author : Email: wubaoyin@hotmail.com (B. Wu) }}\\
\small  College of Mathematics and System Sciences, Xinjiang
University \\ \small  Urumqi, Xinjiang 830046, P.R.China \\
Clive Elphick { \footnote{ Email:
clive.elphick@gmail.com (C. Elphick) }}\\}
\date{}
\begin{document}
\maketitle

\newtheorem{theorem}{Theorem}[section]
\newtheorem{lemma}[theorem]{Lemma}
\newtheorem{corollary}[theorem]{Corollary}
\newtheorem{conjecture}[theorem]{Conjecture}
\theoremstyle{plain} \CJKtilde
\newcommand{\D}{\displaystyle}
\newcommand{\DF}[2]{\D\frac{#1}{#2}}

{\small \noindent{\bfseries Abstract:} Dvo\v{r}\'{a}k \emph{et al.}
introduced a variant of the Randi\'{c} index of a graph $G$, denoted
by $R'(G)$, where $R'(G)=\sum_{uv\in E(G)}\frac 1 {\max\{d(u),
d(v)\}}$, and $d(u)$ denotes the degree of a vertex $u$ in $G$.
The coloring number $col(G)$ of a graph $G$ is the smallest number
$k$ for which there exists a linear ordering of the vertices of $G$
such that each vertex is preceded by fewer than $k$ of its
neighbors. It is well-known that $\chi(G)\leq col(G)$ for any graph
$G$, where $\chi(G)$ denotes the chromatic number of $G$. In this
note, we show that for any graph $G$ without isolated vertices,
$col(G)\leq 2R'(G)$, with equality if and only if $G$ is obtained
from identifying the center of a star with a vertex of a complete
graph. This extends some known results. In addition, we present some
new spectral bounds for
the coloring and achromatic numbers of a graph. \\

\noindent {\bfseries Keywords}: Chromatic number; Coloring number; Achromatic number; Randi\'{c} index \\

\section{\large Introduction}
The Randi\'{c} index $R(G)$ of a (molecular) graph $G$ was
introduced by Milan Randi\'{c} \cite{R} in 1975 as the sum of
$1/\sqrt{d(u)d(v)}$ over all edges $uv$ of $G$, where $d(u)$ denotes
the degree of a vertex $u$ in $G$. Formally,
$$R(G)=\sum\limits_{uv\in E(G)}\frac{1}{\sqrt{d(u)d(v)}}.$$ This index
is useful in mathematical chemistry and has been extensively
studied, see \cite{LG}. For some recent results on the Randi\'{c} index,
we refer to \cite{DP, LLBL, LLPDLS, LS}.

A variation of the Randi\'{c} index of a graph $G$ is called the
Harmonic index, denoted by $H(G)$, which was defined in
\cite{Fajtlowicz} as follows: $$H(G)=\sum_{uv\in E(G)} \frac 2
{d(u)+d(v)}.$$ In 2011 Dvo\v{r}\'{a}k \emph{et al}. introduced another
variant of the Randi\'{c} index of a graph $G$, denoted by $R'(G)$, which has been further studied by Knor et al \cite{Knor}.
 Formally,
$$R'(G)=\sum_{uv\in E(G)}\frac{ 1 }{\max\{d(u), d(v)\}}.$$

It is clear from the definitions that for a graph $G$,
\begin{equation} R'(G)\leq H(G)\leq R(G). \end{equation}

\vspace{3mm} The chromatic number of $G$, denoted by $\chi(G)$, is
the smallest number of colors needed to color all vertices of $G$
such that no pair of adjacent vertices is colored the same. As
usual, $\delta(G)$ and $\Delta(G)$ denote the minimum degree and the
maximum degree of $G$, respectively. The coloring number $col(G)$ of
a graph $G$ is the least integer $k$ such that $G$ has a vertex
ordering in which each vertex is preceded by fewer than $k$ of its
neighbors. The $degeneracy$ of $G$, denoted by $deg(G)$, is defined
as $deg(G)=max\{\delta(H): H\subseteq G\}$. It is well-known (see
Page 8 in \cite{JT}) that for any graph $G$,
\begin{equation}col(G)=deg(G)+1. \end{equation}

List coloring is an extension of coloring of graphs, introduced by
Vizing \cite{V} and independently, by Erd\H{o}s et al. \cite{ERT}.
For each vertex $v$ of a graph $G$, let $L(v)$ denote a list of
colors assigned to $v$. A list coloring is a coloring $l$ of
vertices of $G$ such that $l(v)\in L(v)$ and $l(x)\neq l(y)$ for any
$xy\in E(G)$, where $v, x, y\in V(G)$. A graph $G$ is $k$-choosable
if for any list assignment $L$ to each vertex $v\in V(G)$ with
$|L(v)|\geq k$, there always exists a list coloring $l$ of $G$. The
list chromatic number $\chi_l(G)$ (or choice number) of $G$ is the
minimum $k$ for which $G$ is $k$-choosable.

It is well-known that for any graph $G$,

\begin{equation}\chi(G)\leq \chi_l(G)\leq col(G)\leq \Delta(G)+1.
\end{equation} The detail of the inequalities in (3) can be found in a
survey paper by Tuza \cite{T} on list coloring.

In 2009, Hansen and Vukicevi\'{c} \cite{HV} established the
following relation between the Randi\'{c} index and the chromatic
number of a graph.

\begin{theorem}(Hansen and Vukicevi\'{c} \cite{HV}) Let $G$ be a
simple graph with chromatic number $\chi(G)$ and Randi\'{c} index
$R(G)$. Then $\chi(G)\leq2R(G)$ and equality holds if $G$ is a
complete graph,
possibly with some additional isolated vertices.\\
\end{theorem}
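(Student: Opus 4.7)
The plan is to prove $\chi(G)\leq 2R(G)$ by induction on $|V(G)|$, tracing how both sides change when we peel off a well-chosen vertex. Concretely, let $v\in V(G)$ be a vertex of minimum degree $\delta=\delta(G)$ and set $G'=G-v$. The inductive hypothesis gives $\chi(G')\leq 2R(G')$, so it suffices to control the differences $\chi(G)-\chi(G')$ and $R(G)-R(G')$. The base cases ($G$ edgeless, or $G$ a complete graph $K_m$) are immediate; in particular $R(K_n)=\binom{n}{2}/(n-1)=n/2=\chi(K_n)/2$, which both serves as a benchmark and suggests the shape of the equality characterization.

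The heart of the inductive step is comparing $R(G)$ to $R(G')$. Writing $u_1,\dots,u_\delta$ for the neighbors of $v$, the edges incident to $v$ contribute $\sum_{i=1}^{\delta}\frac{1}{\sqrt{\delta\,d_G(u_i)}}$ to $R(G)$, while every other edge $u_iw\in E(G')$ is reweighted because $d_{G'}(u_i)=d_G(u_i)-1$. I would split into two cases. In Case~A, $\chi(G)=\chi(G')$, and it suffices to show $R(G)\geq R(G')$: the positive contribution from the edges at $v$ must dominate the aggregated negative correction from the degree drops at the $u_i$'s. In Case~B, $\chi(G)=\chi(G')+1$; here $v$ is color-critical, so its neighbors realize every color in an optimal coloring of $G'$, forcing $d(v)=\delta\geq\chi(G)-1$, and one needs the sharper estimate $R(G)\geq R(G')+\tfrac{1}{2}$.

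The main obstacle will be the quantitative estimate in Case~B: one must show that $\sum_{i=1}^{\delta}\frac{1}{\sqrt{\delta\,d_G(u_i)}}$ beats the total correction $\sum_{u_iw\in E(G')}\bigl(\frac{1}{\sqrt{d_{G'}(u_i)\,d_{G'}(w)}}-\frac{1}{\sqrt{d_G(u_i)\,d_G(w)}}\bigr)$ by at least $\tfrac{1}{2}$. The useful fact is that $\frac{1}{\sqrt{a(a-1)}}-\frac{1}{a}$ decays like $a^{-3/2}$, so the correction per affected edge is small when degrees are large; combined with $d_G(u_i)\geq\delta\geq\chi(G)-1$ and a convexity or AM--GM argument applied vertex-by-vertex at the $u_i$'s, the aim is to squeeze out the required $\tfrac{1}{2}$.

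For the equality characterization, I would trace back through the inductive step. Equality in the key estimates of Case~B forces every $u_i$ to have degree exactly $\delta$, to be adjacent to every vertex of $G'$, and to already lie in a common clique; an application of the inductive equality hypothesis to the non-isolated part of $G'$ then pins it down as a complete graph with possibly some isolated vertices attached, and reinserting $v$ yields $G=K_n$ together with additional isolated vertices. Case~A with equality, by contrast, would force the new vertex $v$ to be isolated, so that the complete-graph structure comes entirely from $G'$.
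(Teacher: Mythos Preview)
The paper does not give a self-contained proof of Theorem~1.1; it is quoted as a known result of Hansen and Vuki\'{c}evi\'{c}, and then \emph{re-derived} as a corollary of the paper's own Theorem~1.4 via the chain $\chi(G)\le col(G)\le 2R'(G)\le 2R(G)$. So the relevant comparison is between your induction-on-$\chi$ plan and the paper's proof of Theorem~1.4. These are genuinely different strategies. The paper never tracks how $\chi$ changes under vertex deletion and never needs a ``Case~A / Case~B'' split. Instead it (i) replaces $\chi$ by the coloring number $col$ and $R$ by the smaller variant $R'$; (ii) proves cleanly (Theorem~2.1) that $R'(G)\ge R'(G-v)$ whenever $v$ has minimum degree; (iii) peels along a degeneracy ordering down to the subgraph $G_k$ with $\delta(G_k)=deg(G)$; and (iv) finishes with the one-line estimate $2R'(G_k)\ge 2|E(G_k)|/\Delta(G_k)\ge \delta(G_k)+1=col(G)$. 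The advantage is that the monotonicity lemma for $R'$ is short and exact, with an explicit equality characterisation, and no quantitative ``gain of $\tfrac12$'' is ever needed.

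Your proposal, by contrast, has a genuine gap precisely at its acknowledged ``main obstacle''. In Case~B you need
\[
R(G)-R(G-v)\;\ge\;\tfrac12
\]
whenever $v$ has minimum degree and $\chi(G)=\chi(G-v)+1$, but you only offer a heuristic (``the correction per edge decays like $a^{-3/2}$'', then ``a convexity or AM--GM argument''). That is not a proof, and this inequality is exactly the hard content of the theorem: the only structural information Case~B gives you is $\delta\ge\chi(G)-1$, which does not by itself control the sign or size of the aggregated correction terms at the $u_i$'s. You would in effect be reproving the Hansen--Vuki\'{c}evi\'{c} lemma from scratch, and nothing in your sketch indicates how to close it. Even Case~A (that $R(G)\ge R(G-v)$ for a minimum-degree vertex) is stated as something to be shown rather than shown. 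Finally, note that the statement only asserts the ``if'' direction of equality, so the equality analysis you outline goes beyond what is asked; and in the paper's setting the equality cases for $\chi\le 2R'$ are the graphs $K_k\bullet K_{1,n-k}$, which is strictly larger than the class you are aiming for.
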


Some interesting extensions of Theorem 1.1 were recently obtained.

\begin{theorem} (Deng \emph{et al} \cite{Deng} ) For a graph $G$,
$\chi(G)\leq 2H(G)$ with equality if and only if $G$ is a complete
graph possibly with some additional isolated vertices.
\end{theorem}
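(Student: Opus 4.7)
The plan is to follow Hansen and Vukicevi\'{c}'s strategy for Theorem 1.1, adapted to the harmonic index. Since isolated vertices contribute nothing to $H(G)$ and do not affect $\chi(G)$, I assume $G$ has no isolated vertices and set $k=\chi(G)\geq 2$. First I would pass to a $k$-vertex-critical subgraph $G^{*}$ of $G$: such a subgraph satisfies $\chi(G^{*})=k$, $\delta(G^{*})\geq k-1$, and $|V(G^{*})|\geq k$, so in particular $d_G(v)\geq k-1$ for every $v\in V(G^{*})$, which drives the lower bound on $H(G)$.

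Next I would write
\[
H(G)=\sum_{v\in V(G)}\sum_{u\sim v}\frac{1}{d_G(u)+d_G(v)}
\]
and lower-bound each inner sum by $d_G(v)/(\Delta(G)+d_G(v))$ using $d_G(u)\leq \Delta(G)$. Restricting the outer sum to $v\in V(G^{*})$ and exploiting the monotonicity of $x\mapsto x/(\Delta(G)+x)$, one gets
\[
H(G)\geq |V(G^{*})|\cdot\frac{k-1}{\Delta(G)+k-1}\geq \frac{k(k-1)}{\Delta(G)+k-1}.
\]
This already yields $H(G)\geq k/2$ when $\Delta(G)\leq k-1$, covering the complete-graph case exactly.

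The main obstacle is the regime $\Delta(G)>k-1$, where the crude bound above falls short of $k/2$. I would close this gap either by (a) harvesting the extra positive contributions to $H(G)$ from vertices outside $V(G^{*})$, which have $d_G(v)\geq 1$ by the no-isolated-vertex assumption, or (b) inducting on $n-|V(G^{*})|$ by deleting a non-critical vertex and tracking how $H$ changes as the degrees of its neighbors drop; both routes require careful bookkeeping because deleting a vertex \emph{increases} the terms associated with edges incident to its neighborhood, so the loss from the deleted edges must dominate that gain. For the equality characterization, I would chase each inequality back to an equality: these conditions force $|V(G^{*})|=k$, $\Delta(G)=k-1$, $G^{*}$ is $(k-1)$-regular on $k$ vertices (hence $G^{*}=K_k$), and every vertex of $G$ outside $V(G^{*})$ is isolated, yielding the stated form.
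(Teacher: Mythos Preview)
Your outline leaves the real work undone in the regime $\Delta(G)>k-1$. Route~(b) is the right instinct, but what you call ``careful bookkeeping'' is in fact the whole proof: you need $H(G)\ge H(G-v)$ for some vertex $v$ with $\chi(G-v)=k$, and nothing in your sketch establishes this. A vertex outside $G^{*}$ preserves $\chi$ but need not have small degree, and the monotonicity of $H$ under deleting such a vertex is precisely the lemma that has to be proved, not a detail to be filled in. Route~(a) with the crude estimate $d(v)/(\Delta+d(v))\ge 1/(\Delta+1)$ on the outside vertices does not close the deficit $k/2-k(k-1)/(\Delta+k-1)$ either.

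The paper does not argue through $\chi$ and critical subgraphs at all. It proves the stronger statement $col(G)\le 2R'(G)$ (Theorem~1.4) and then reads off Theorem~1.2 from the chain $\chi(G)\le col(G)\le 2R'(G)\le 2H(G)$. The engine is Theorem~2.1: removing a \emph{minimum-degree} vertex never increases $R'$. One peels off vertices in a degeneracy ordering until reaching a subgraph $G_k$ with $\delta(G_k)=col(G)-1$, where the direct estimate $2R'(G_k)\ge 2|E(G_k)|/\Delta(G_k)\ge \delta(G_k)+1=col(G)$ finishes. Working with $col$ rather than $\chi$ is what makes the induction run: the vertex to delete is by definition of minimum degree, which is exactly the hypothesis under which the monotonicity lemma holds, whereas your critical-subgraph framework asks you to delete a vertex selected for chromatic rather than degree reasons. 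For the equality case, $\chi(G)=2H(G)$ forces every inequality in the chain to be tight; Theorem~1.4 then gives $G\cong K_k\bullet K_{1,n-k}$, and the paper checks (Corollary~1.7) that $2H(K_k\bullet K_{1,n-k})=k$ only when $k=n$.
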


\begin{theorem} (Wu, Yan and Yang \cite{wu14} ) If $G$ is a graph of order $n$ without isolated vertices, then $$col(G)\leq
2R(G),$$ with equality if and only if $G\cong K_n$.
\end{theorem}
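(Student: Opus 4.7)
The plan is to proceed by induction on $n = |V(G)|$. The base case $n = 2$ is immediate: the no-isolated-vertex hypothesis forces $G = K_2$, in which case $col(K_2) = 2 = 2R(K_2)$.

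For the inductive step, choose a vertex $v$ of minimum degree $\delta = d(v)$ in $G$. Two easy situations are disposed of first. If $\delta = n - 1$, then every vertex has degree $n - 1$, so $G = K_n$; a direct computation yields $R(K_n) = \binom{n}{2}/(n-1) = n/2 = col(K_n)/2$, so equality holds. If instead $G$ contains a $K_2$-component (which can arise only when $\delta = 1$ and $v$'s unique neighbor is itself a pendant), let $G_0$ be the graph obtained by deleting this component; the identities $R(G) = R(G_0) + 1$ and $col(G) = \max\{col(G_0), 2\}$, combined with the inductive hypothesis applied to $G_0$, yield $col(G) \le 2R(G)$, with strict inequality unless $G = K_2$.

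In the remaining sub-case, $\delta \le n - 2$ and $G - v$ has no isolated vertex, so the inductive hypothesis gives $col(G-v) \le 2R(G-v)$. Because $v$ has minimum degree, the identity $col(G) = \max\{col(G-v),\, \delta + 1\}$ reduces the induction to verifying
\[
2\bigl(R(G) - R(G-v)\bigr) \;\ge\; col(G) - col(G-v).
\]
I decompose
\[
R(G) - R(G-v) \;=\; \sum_{u \in N(v)} \frac{1}{\sqrt{\delta\, d_G(u)}} \;-\; D,
\]
where $D \ge 0$ collects the per-edge corrections $\frac{1}{\sqrt{d_{G-v}(x) d_{G-v}(y)}} - \frac{1}{\sqrt{d_G(x) d_G(y)}}$ summed over edges $xy$ of $G - v$ with at least one endpoint in $N(v)$.

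The main obstacle is bounding $D$ tightly enough. Using the elementary estimate $\frac{1}{\sqrt{a-1}} - \frac{1}{\sqrt{a}} \le \frac{1}{2(a-1)\sqrt{a}}$ for $a \ge 2$, together with the minimality condition $d_G(u) \ge \delta$ for every $u \in N(v)$, I expect to dominate $D$ by a controlled fraction of the positive sum above. This suffices in the easy regime $col(G) = col(G-v)$; in the harder regime $col(G) = \delta + 1 > col(G-v)$ (where $deg(G-v) \le \delta - 1$), an additional $\tfrac{1}{2}$ is needed on the right, to be extracted by exploiting that no subgraph of $G - v$ has minimum degree $\delta$. The equality analysis then follows by retracing the induction: strict inequality is forced whenever $\delta \le n - 2$, so equality forces $\delta = n - 1$ at every step, that is, $G = K_n$.
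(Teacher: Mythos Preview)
The paper does not prove this theorem directly; it is quoted from \cite{wu14}. The paper's own contribution is the stronger Theorem 1.4 (with $R'$ in place of $R$), from which Theorem 1.3 follows via $R'(G)\le R(G)$ and a check that $R'=R$ on the extremal family $K_k\bullet K_{1,n-k}$ forces $k=n$. That argument is organised quite differently from yours: it proves only the \emph{weak} monotonicity $R'(G)\ge R'(G-v)$ for a minimum-degree vertex $v$ (Theorem 2.1), iterates this along a degeneracy ordering down to the subgraph $G_k$ with $\delta(G_k)=deg(G)$, and then establishes the single direct bound
\[
2R'(G_k)\;\ge\;\frac{2|E(G_k)|}{\Delta(G_k)}\;\ge\;\frac{\Delta(G_k)+(n_k-1)\delta(G_k)}{\Delta(G_k)}\;\ge\;\delta(G_k)+1\;=\;col(G).
\]
The ``$+1$'' is thus produced once, at the bottom of the chain, rather than at each inductive step.

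Your per-step induction asks for more, and this is where there is a genuine gap. In the regime $col(G)=\delta+1>col(G-v)$ you need $R(G)-R(G-v)\ge\tfrac12$, but you only say this is ``to be extracted by exploiting that no subgraph of $G-v$ has minimum degree $\delta$''; no mechanism is given. The step is not routine: the positive contribution $\sum_{u\in N(v)}1/\sqrt{\delta\,d_G(u)}$ can be well below $\tfrac12$ when the neighbours of $v$ have large degree, so any argument must make substantive use of the degeneracy drop $deg(G-v)=\delta-1$, and your $D$-estimate as stated does not do this. (Even in the easy regime your sketch needs a little more care: for edges with both endpoints in $N(v)$, both degrees decrease, and the bound $d_G(w)\ge\delta$ becomes only $d_G(w)-1\ge\delta-1$, so the per-edge estimate you quote does not directly apply.) The paper's device of deferring all of the gain to the single subgraph $G_k$ is precisely what sidesteps this difficulty; if you want to keep a pure one-vertex induction you will need a new idea for the hard case.
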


Let $n$ and $k$ be two integers such that $n\geq k\geq 1$. We denote the graph obtained from identifying the center of the star $K_{1, n-k}$ with a vertex of the complete graph $K_k$ by $K_k\bullet K_{1,n-k}$. In particular, if $k\in\{1, 2\}$, $K_k\bullet K_{1,n-k}\cong K_{1,n-1}$; if $k=n$, $K_k\bullet K_{1,n-k}\cong K_n$.
The primary aim of this note is to prove stronger versions of Theorems
1.1-1.3, noting the inequalities in (1).

\begin{theorem} For a graph $G$ of order $n$ without isolated vertices, $col(G)\leq
2R'(G)$, with equality if and only if $G\cong K_k\bullet K_{1,n-k}$ for some $k\in\{1, \ldots, n\}$.
\end{theorem}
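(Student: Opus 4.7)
The plan is to proceed by strong induction on $n = |V(G)|$, with base case $n = 2$ giving $G = K_2 = K_2 \bullet K_{1, 0}$ and equality $col(G) = 2 = 2R'(G)$. For the inductive step, I pick a vertex $v$ of minimum degree $\delta = \delta(G)$. The main technical ingredient is a vertex-removal lemma: if $G - v$ has no isolated vertex, then $R'(G) \geq R'(G - v)$. To prove this, write $R'(G) - R'(G - v) = P - N$, where $P = \sum_{u \sim v} \tfrac{1}{d_G(u)}$ collects the $\delta$ edges at $v$ (each equal to $1/d_G(u)$ since $d_G(u) \geq \delta$), and $N \geq 0$ records the corrections $\tfrac{1}{M'(e)} - \tfrac{1}{M(e)}$ for edges $e \in E(G - v)$ whose maximum endpoint-degree dropped. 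Assigning each such edge to its higher $G$-degree endpoint $u \in N(v)$, each $u$ owns at most $d_G(u) - 1$ edges, each contributing at most $\tfrac{1}{d_G(u)(d_G(u)-1)}$, so $N \leq \sum_{u \in N(v)} \tfrac{1}{d_G(u)} = P$.

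Next, using the identity $col(G) = \max(\delta + 1, col(G - v))$, I would split the inductive step into three cases. If $v$ has degree $1$ and its unique neighbor $u$ also has degree $1$, peel off the $K_2$-component $\{u, v\}$: by induction, $2R'(G) = 2 + 2R'(G \setminus \{u, v\}) \geq 2 + col(G \setminus \{u, v\}) \geq \max(2, col(G \setminus \{u, v\})) = col(G)$, with equality only when $G = K_2$. Otherwise $G' = G - v$ has no isolated vertex, and in the benign subcase $col(G') = col(G)$, the lemma and induction yield $2R'(G) \geq 2R'(G') \geq col(G') = col(G)$. Tracing equality through the lemma shows that the only new extremal graphs here arise by attaching $v$ as a leaf to the center of a $K_{k'} \bullet K_{1, n-1-k'}$ (or, when $n - 1 - k' = 0$, to any vertex of $K_{k'}$), yielding $K_{k'} \bullet K_{1, n-k'}$.

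The main obstacle is the critical subcase, where $col(G - v) < col(G)$ for every min-degree vertex $v$. This forces $\delta \geq 2$, $col(G) = \delta + 1$, and the rigid structural condition that every minimum-degree vertex of $G$ lies in every subgraph of $G$ with minimum degree $\delta$. Here single-vertex removal breaks down, so I would argue directly that $2R'(G) \geq \delta + 1$, with equality iff $G = K_{\delta + 1}$. Taking a vertex-minimal subgraph $H$ with $\delta(H) = \delta$, the structural condition forces $V(H) \supseteq N_G[v]$ for each min-degree $v$, and counting shows that $|V(H)| = \delta + 1$ implies $H = K_{\delta + 1}$. An adaptation of the vertex-removal lemma applied to the edges of $H$ would give $\sum_{uw \in E(H)} \tfrac{1}{\max(d_G(u), d_G(w))} \geq \tfrac{\delta + 1}{2}$, hence $2R'(G) \geq \delta + 1$; equality forces $H = K_{\delta + 1}$ to be a connected component of $G$ with no other components, i.e., $G = K_{\delta + 1} = K_n \bullet K_{1, 0}$. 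Combining all equality cases across the inductive steps yields precisely the extremal family $K_k \bullet K_{1, n - k}$ for $k \in \{1, \dots, n\}$.
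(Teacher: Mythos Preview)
Your overall architecture matches the paper's: the vertex-removal lemma is exactly their Theorem~2.1 (with the same proof), and your induction on $n$ with a chosen minimum-degree vertex is a repackaging of their degeneracy-ordering argument, where they peel $G=G_n\to G_{n-1}\to\cdots\to G_k$ and then bound $2R'(G_k)$ directly. Your ``benign subcase'' is their chain $R'(G)\ge R'(G_{n-1})\ge\cdots$, and your ``critical subcase'' corresponds to their terminal step at $G_k$.

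The genuine gap is in that critical subcase. You assert that for a vertex-minimal $H\subseteq G$ with $\delta(H)=\delta$,
\[
\sum_{uw\in E(H)}\frac{1}{\max\{d_G(u),d_G(w)\}}\ \ge\ \frac{\delta+1}{2}
\]
``by an adaptation of the vertex-removal lemma.'' This is not justified, and I do not see how the lemma adapts: the degrees in the sum are $G$-degrees, not $H$-degrees, so the obvious averaging gives only $|E(H)|/\Delta(G)\ge (\delta+1)\delta/(2\Delta(G))$, which falls short unless $G$ is regular; and you cannot peel from $G$ down to $H$ by removing minimum-degree vertices, since in the critical case every minimum-degree vertex lies in $H$.

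The fix is to notice that once $col(G-v)<col(G)$ for some minimum-degree $v$, you already have $deg(G)=\delta$, so you may take $H=G$ and average with $G$-degrees exactly as the paper does:
\[
2R'(G)\ \ge\ \frac{2|E(G)|}{\Delta(G)}\ \ge\ \frac{\Delta(G)+(n-1)\delta}{\Delta(G)}\ \ge\ \delta+1=col(G),
\]
with equality forcing $\Delta(G)=n-1$, a single vertex of degree $\Delta$, all others of degree $\delta$, and every edge meeting a $\Delta$-vertex; for $\delta\ge 2$ this gives $G\cong K_n$. There is no need for a proper subgraph $H$ or the structural claim that every minimum-degree vertex lies in every $\delta$-dense subgraph. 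With this replacement your proof goes through and is essentially the paper's argument, though your equality analysis in the benign subcase (pinning down where the leaf $v$ attaches to $K_{k'}\bullet K_{1,n-1-k'}$) still needs the same careful case check the paper carries out.
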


\begin{corollary}
For a graph $G$ of order $n$ without isolated vertices, $\chi(G)\leq
2R'(G)$, with equality if and only if $G\cong K_k\bullet K_{1,n-k}$ for some $k\in\{1, \ldots, n\}$.
\end{corollary}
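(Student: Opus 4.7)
The plan is to derive Corollary 1.5 as a direct consequence of Theorem 1.4, since the chain $\chi(G)\le col(G)\le 2R'(G)$ already gives the inequality once Theorem 1.4 is in hand. The remaining work is therefore entirely in the equality characterization.

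For the ``if'' direction, I would simply compute $\chi(G)$ and $R'(G)$ for $G=K_k\bullet K_{1,n-k}$ and verify that they coincide up to the factor $2$. Write $c$ for the identified vertex; it has degree $n-1$, the remaining $k-1$ vertices of the $K_k$-side have degree $k-1$, and the $n-k$ leaves have degree $1$. Since $n-1\ge k-1$ and $n-1\ge 1$, the maximum in every edge-term involving $c$ equals $n-1$, so for $k\ge 2$
\[
R'(G)=\binom{k-1}{2}\cdot\frac{1}{k-1}+(k-1)\cdot\frac{1}{n-1}+(n-k)\cdot\frac{1}{n-1}=\frac{k-2}{2}+1=\frac{k}{2},
\]
while $G\supseteq K_k$ and a proper $k$-coloring exists by giving the leaves the color used at $c$, so $\chi(G)=k$. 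The degenerate case $k=1$ (where $G\cong K_{1,n-1}$) gives $R'(G)=1$ and $\chi(G)=2$, again matching. Thus $\chi(G)=2R'(G)$ in every case.

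For the ``only if'' direction, suppose $\chi(G)=2R'(G)$. Combining $\chi(G)\le col(G)$ with Theorem 1.4 yields
\[
2R'(G)=\chi(G)\le col(G)\le 2R'(G),
\]
so $col(G)=2R'(G)$; the equality clause of Theorem 1.4 then forces $G\cong K_k\bullet K_{1,n-k}$ for some $k\in\{1,\ldots,n\}$, completing the characterization.

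The only potential obstacle here is the bookkeeping in the computation of $R'(K_k\bullet K_{1,n-k})$ and the small-$k$ special cases noted in the paper ($k\in\{1,2\}$ collapsing to a star, and $k=n$ collapsing to $K_n$); once these are handled carefully, the argument reduces to the sandwich above and needs nothing beyond Theorem 1.4.
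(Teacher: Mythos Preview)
Your proof is correct and follows essentially the same route as the paper: the inequality comes from the chain $\chi(G)\le col(G)\le 2R'(G)$ via Theorem~1.4, and the equality characterization is obtained by the sandwich argument together with the explicit verification that $\chi(K_k\bullet K_{1,n-k})=k=2R'(K_k\bullet K_{1,n-k})$. Your write-up is a bit more detailed in the ``if'' computation than the paper's one-line check, but the structure is identical.
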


\begin{corollary}
For a graph $G$ of order $n$ without isolated vertices, $\chi_l(G)\leq
2R'(G)$, with equality if and only if $G\cong K_k\bullet K_{1,n-k}$ for some $k\in\{1, \ldots, n\}$.
\end{corollary}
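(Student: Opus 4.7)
The plan is to derive Corollary 1.6 as an immediate consequence of Theorem 1.4 together with the standard inequality chain (3), in particular $\chi_l(G)\leq col(G)$. Combining these yields
$$\chi_l(G)\leq col(G)\leq 2R'(G),$$
so the upper bound is free. The entire substance of the proof therefore lies in verifying the equality case.

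For the easy direction of the characterization, suppose $\chi_l(G)=2R'(G)$. Then the sandwich above forces $\chi_l(G)=col(G)=2R'(G)$; in particular $col(G)=2R'(G)$, so Theorem~1.4 applies and gives $G\cong K_k\bullet K_{1,n-k}$ for some $k\in\{1,\ldots,n\}$.

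For the converse, assume $G\cong K_k\bullet K_{1,n-k}$. From the equality statement of Theorem~1.4 we have $2R'(G)=col(G)$, and a direct computation of the degeneracy shows $col(G)=k$ (the $K_k$ block has minimum degree $k-1$, while any subgraph containing a pendant leaf has minimum degree $1$). Applying (3) gives $\chi_l(G)\leq col(G)=k$. On the other hand, $G$ contains $K_k$ as a subgraph, so $\chi_l(G)\geq \chi_l(K_k)=k$. Hence $\chi_l(G)=k=2R'(G)$, as required.

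The only step that is not purely mechanical is the lower bound $\chi_l(G)\geq k$ in the converse direction, but this is immediate from monotonicity of $\chi_l$ under taking subgraphs together with $\chi_l(K_k)=\chi(K_k)=k$. Thus the corollary reduces entirely to Theorem~1.4 and the basic chain (3), and no new combinatorial idea is needed.
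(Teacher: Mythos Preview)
Your proof is correct and follows essentially the same route as the paper: the paper likewise derives $\chi_l(G)\le 2R'(G)$ from (3) and Theorem~1.4, reduces the equality case to $col(G)=2R'(G)$, and then asserts $\chi_l(K_k\bullet K_{1,n-k})=k=2R'(K_k\bullet K_{1,n-k})$. Your argument is merely a fuller version of this, spelling out the computation the paper leaves as ``easy to check'' via the sandwich $k=\chi_l(K_k)\le \chi_l(G)\le col(G)=k$.
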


\begin{corollary}
For a graph $G$ of order $n$ without isolated vertices, $col(G)\leq 2H(G)$, with equality if and
only if $G\cong K_n$.
\end{corollary}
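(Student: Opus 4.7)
The plan is to derive this directly from Theorem 1.4 together with the inequality $R'(G)\leq H(G)$ recorded in (1). Chaining the two estimates gives
\[
col(G)\leq 2R'(G)\leq 2H(G),
\]
which proves the inequality with no additional work. The substance of the proof therefore lies entirely in the equality characterization.

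For the equality case, I would observe that $col(G)=2H(G)$ forces both intermediate inequalities above to be equalities. From Theorem 1.4, $col(G)=2R'(G)$ requires $G\cong K_k\bullet K_{1,n-k}$ for some $k\in\{1,\ldots,n\}$. From the definitions, the edge-by-edge inequality $\frac{1}{\max\{d(u),d(v)\}}\leq\frac{2}{d(u)+d(v)}$ is tight precisely when $d(u)=d(v)$, so $R'(G)=H(G)$ is equivalent to $d(u)=d(v)$ for every edge $uv\in E(G)$.

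The remaining step is to determine which graphs $K_k\bullet K_{1,n-k}$ satisfy this degree-balanced condition on every edge. Let $v$ be the vertex at which the star and the complete graph are identified; then $d(v)=(k-1)+(n-k)=n-1$, every other vertex of the $K_k$-part has degree $k-1$, and every leaf of the star has degree $1$. For every edge to have equal-degree endpoints, the edges incident to $v$ (when they exist) force $n-1=k-1$ or $n-1=1$, which quickly collapses the cases $1\leq k<n$ to the trivial ones where no mixed edge exists. The only possibility that survives is $k=n$, giving $G\cong K_n$; conversely $K_n$ is clearly degree-regular, so both equalities hold for it.

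The main (and only) obstacle is the case analysis in the last paragraph, but it is routine once the reduction to $K_k\bullet K_{1,n-k}$ with all edges degree-balanced has been made. No further machinery beyond Theorem 1.4 and inequality (1) is needed.
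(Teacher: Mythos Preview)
Your proposal is correct and follows essentially the same route as the paper: chain $col(G)\le 2R'(G)\le 2H(G)$ via Theorem~1.4 and (1), then use Theorem~1.4 to reduce the equality case to $G\cong K_k\bullet K_{1,n-k}$ and check that only $k=n$ works. The only cosmetic difference is that the paper phrases the final check as computing $2H(K_k\bullet K_{1,n-k})$ directly, whereas you exploit the equivalent condition $R'(G)=H(G)\Leftrightarrow d(u)=d(v)$ for all $uv\in E(G)$; both come to the same short case analysis.
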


The proofs of these results will be given in the next section.

\vspace{3mm} A {\em complete $k$-coloring} of a graph $G$ is a
$k$-coloring of the graph such that for each pair of different
colors there are adjacent vertices with these colors. The {\it
achromatic number} of $G$, denoted by $\psi(G)$, is the maximum
number $k$ for which the graph has a complete $k$-coloring. Clearly,
$\chi(G)\leq \psi(G)$ for a graph $G$. In general,
$col(G)$ and $\psi(G)$ are incomparable. Tang \emph{et al.} \cite{tang15} proved that for a graph $G$,
$\psi(G)\leq 2R(G)$.

\vspace{3mm} In Section 3, we prove new bounds for the coloring
and achromatic numbers of a graph in terms of its spectrum, which strengthen $col(G) \le 2R(G)$ and $\psi(G) \le 2R(G)$. In
Section 4, we provide an example and propose two related conjectures.

\section{\large The proofs}

For convenience, an edge $e$ of a graph $G$ may be viewed as a
2-element subset of $V(G)$ and if a vertex $v$ is an end vertex of
$e$, we denote the other end of $e$ by $e\setminus v$. Moreover,
$\partial_G(v)$ denotes the set of edges which are incident with $v$
in $G$.

First we need the following theorem, which will play a key role in the
proof of Theorem 1.4.

\begin{theorem} If $v$ is a vertex of
$G$ with $d(v)=\delta(G)$, then
$$R'(G)-R'(G-v)\geq 0,$$ with equality if and only if
$N_G(v)$ is an independent set of $G$ and
$d(w)<d(v_i)$ for all $w\in N(v_i)\setminus \{v\}$.
\end{theorem}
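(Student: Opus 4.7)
The plan is to compute $R'(G) - R'(G-v)$ edge by edge and to show that each negative contribution can be absorbed locally by a positive contribution at some $v_i \in N(v)$. Write $N(v) = \{v_1, \ldots, v_k\}$ with $k = d(v)$, let $d_i = d(v_i)$, and set $A_i = N(v_i) \cap N(v)$ and $B_i = N(v_i) \setminus (N(v) \cup \{v\})$, so that $|A_i| + |B_i| = d_i - 1$. Since $v$ has minimum degree, $d_i \geq d(v)$ for each $i$, and hence the $k$ edges incident with $v$ contribute $\sum_{i=1}^{k} 1/d_i$ to $R'(G)$ and nothing to $R'(G-v)$.

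Next I would examine how each remaining edge contributes. For $xy \in E(G-v)$ only the value of $\max\{d(x), d(y)\}$ can change, and three cases arise: \emph{(i)} both endpoints lie in $N(v)$, say $x = v_i$ and $y = v_j$, in which case both degrees drop by one and the contribution decreases by $1/(M_{ij}(M_{ij}-1))$, with $M_{ij} = \max\{d_i, d_j\}$; \emph{(ii)} exactly one endpoint, say $x = v_i$, lies in $N(v)$ and $d(y) < d_i$, in which case the contribution decreases by $1/(d_i(d_i - 1))$; \emph{(iii)} in every other situation (including $d(y) \geq d_i$) the contribution is unchanged. This yields an exact expression for $R'(G) - R'(G-v)$ as $\sum_{i}1/d_i$ minus a sum of terms of the above two forms.

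The main step is to charge the negative terms to vertices of $N(v)$ so that each $v_i$ ends up with nonnegative total. I would assign each type-\emph{(ii)} edge $v_i y$ entirely to $v_i$ and split each type-\emph{(i)} edge $v_i v_j$ equally between its two endpoints. The net charge at $v_i$ is then bounded below by
\[
\frac{1}{d_i} \;-\; \frac{|B_i|}{d_i(d_i-1)} \;-\; \sum_{v_j \in A_i}\frac{1}{2 M_{ij}(M_{ij}-1)},
\]
and using $M_{ij} \geq d_i$ together with $|A_i| + |B_i| = d_i - 1$, a short manipulation reduces this lower bound to $|A_i|/(2d_i(d_i-1)) \geq 0$. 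Summing over $i$ yields $R'(G) - R'(G-v) \geq 0$.

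For equality every local bound must be simultaneously tight. The residual slack $|A_i|/(2d_i(d_i-1))$ vanishes iff $|A_i| = 0$ for every $i$, so $N(v)$ must be independent; and the type-\emph{(ii)} bound is tight iff every $w \in B_i$ satisfies $d(w) < d_i$. When $A_i = \emptyset$ we have $B_i = N(v_i) \setminus \{v\}$, so these two requirements together recover the stated characterization. The main obstacle I anticipate is choosing the right apportionment of the type-\emph{(i)} penalty: an asymmetric charging would obscure the extremal structure, whereas the even split makes the slack term isolate precisely the edges within $N(v)$.
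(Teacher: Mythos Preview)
Your proof is correct and follows the same strategy as the paper: compute $R'(G)-R'(G-v)$ edge by edge, charge the negative contributions to the neighbours $v_i$ of $v$, and verify that the charge at each $v_i$ is bounded by the positive term $1/d_i$ coming from the deleted edge $vv_i$.

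The only real difference is in the bookkeeping for edges with both ends in $N(v)$. The paper orders the $v_i$ so that $d_1\ge\cdots\ge d_k$ and (implicitly) assigns each such edge $v_iv_j$ entirely to the endpoint of larger degree, obtaining at $v_i$ a total charge of at most $(d_i-1)/\bigl(d_i(d_i-1)\bigr)=1/d_i$; you instead split that edge's penalty evenly between $v_i$ and $v_j$ and use $M_{ij}\ge d_i$. Your version has the advantage that the leftover slack is exactly $|A_i|/\bigl(2d_i(d_i-1)\bigr)$, so the condition $|A_i|=0$ (independence of $N(v)$) falls out immediately and the remaining tightness condition on $B_i$ is the degree condition $d(w)<d_i$. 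The paper reaches the same characterisation by noting that $a_i+b_i=d_i-1$ forces every edge at $v_i$ other than $vv_i$ to be accounted for, which rules out edges inside $N(v)$ at the endpoint where they are \emph{not} charged. The two arguments are equivalent in spirit; yours is slightly more transparent for the equality case.
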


\begin{proof}
Let $k=d_G(v)$. The result is trivial for $k=0$. So, let $k>0$ and
let $N_G(v)=\{v_1, \cdots, v_k\}$ and $d_i=d_G(v_i)$ for each $i$.
Without loss of generality, we may assume that $d_1\geq \cdots \geq
d_k$. Let
$$E_1=\{e: e\in\partial_G(v_1)\setminus \{vv_1\}\ \text{such that}\
d_G(e\setminus v_1)<d_G(v_1)\ \text {and}\ e\not\subseteq N_G(v)
\},$$
$$E_1'=\{e: e\in\partial_G(v_1)\setminus \{vv_1\}\ \text{such that}\ \
d_G(e\setminus v_1)=d_G(v_1)\ \text {and}\ e\subseteq N_G(v)\},$$
and for an integer $i\geq 2$,
$$E_i=\{e: e\in\partial_G(v_i)\setminus \{vv_i\}\ \text{such that}\
d_G(e\setminus v_i)<d_G(v_i)\ \text {and}\ e\not\subseteq
N_G(v)\},$$
$$E_i'=\{e: e\in\partial_G(v_i)\setminus \{vv_i\}\ \text{such that}\ \
d_G(e\setminus v_i)=d_G(v_i)\ \text {and}\ e\subseteq N_G(v)\}
\setminus (\cup_{j=1}^{j=i-1} E_j').$$ Let $a_i=|E_i|$ and
$b_i=|E_i'|$ for any $i\geq 1 $. Since $a_i+b_i\leq d_i-1$,

$$R'(G)-R'(G-v)=\sum_{i=1}^k \frac 1 {d_i}-\sum_{i=1}^k \frac {a_i+b_i}
{d_i(d_i-1)}\geq 0,$$ with equality if and only if $a_i+b_i=d_i-1$
for all $i$, i.e., $N_G(v)$ is an independent set of $G$ and
$d(w)<d(v_i)$ for all $w\in N(v_i)\setminus \{v\}$.

\end{proof}

\begin{lemma}
If $T$ is a tree of order $n\geq 2$, then $R'(T)\geq 1$, with equality if and only if
$T=K_{1,n-1}$.
\end{lemma}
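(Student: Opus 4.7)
The plan is to prove the lemma by induction on $n$, using Theorem 2.1 to strip a leaf at each step. The base case $n = 2$ is immediate: $T = K_2 = K_{1,1}$, so $R'(T) = 1$ and $T$ is a star, making both the inequality and its equality characterization hold.

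For the inductive step with $n \geq 3$, I pick any leaf $v$ of $T$. Since $\delta(T) = 1 = d(v)$, Theorem 2.1 applies and yields $R'(T) \geq R'(T - v)$. The graph $T - v$ is again a tree, now on $n - 1 \geq 2$ vertices, so the inductive hypothesis gives $R'(T - v) \geq 1$, and chaining the two inequalities produces $R'(T) \geq 1$.

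For the equality characterization, the \emph{if} direction is a direct calculation: every edge of $K_{1, n-1}$ has $\max$-degree $n - 1$, so $R'(K_{1, n-1}) = (n-1) \cdot \frac{1}{n-1} = 1$. For the \emph{only if} direction, suppose $R'(T) = 1$. Both inequalities in the inductive step must then be equalities, so $R'(T - v) = 1$ and the equality clause of Theorem 2.1 applies. By induction, $T - v$ is a star. Writing $v_1$ for the unique neighbor of $v$ in $T$, the equality condition of Theorem 2.1 reduces to $d_T(w) < d_T(v_1)$ for every $w \in N_T(v_1) \setminus \{v\}$ (independence of $N_T(v) = \{v_1\}$ is automatic). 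A short case split on the location of $v_1$ inside the star $T - v$ closes the argument: if $v_1$ is the center of $T - v$ then $T = K_{1, n-1}$ directly; if $v_1$ is a leaf of $T - v$ then $d_T(v_1) = 2$ and its other neighbor is the center of $T - v$ with degree $n - 2$ in $T$, so the strict inequality forces $n - 2 < 2$ and hence $n = 3$, giving $T = P_3 = K_{1, 2}$.

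I do not anticipate a serious obstacle. The only care needed is in tracking degrees correctly between $T$ and $T - v$ when invoking the equality clause of Theorem 2.1, and in noting that for $n = 3$ the star $T - v = K_2$ has no genuine distinction between a center and a leaf, so every placement of $v$ still produces a star.
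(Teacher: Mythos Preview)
Your proof is correct and follows essentially the same approach as the paper's: induction on $n$, removing a leaf via Theorem~2.1, and in the equality case using the strict-degree condition from Theorem~2.1 to force the leaf's neighbor to be the center of the smaller star. The only cosmetic difference is that the paper takes $n\in\{2,3\}$ as base cases (so the star $T-v$ always has an unambiguous center), whereas you start the induction at $n=2$ and handle the $n=3$ ambiguity explicitly.
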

\begin{proof}
By induction on $n$. It can be easily checked that $R'(K_{1,n-1})=1$. So, the assertion of the lemma is true for $n\in\{2, 3\}$. So, we assume that $n\geq 4$.
Let $v$ be a leaf of $T$. Then $T-v$ is a tree of order $n-1$. By Theorem 2.1, $R'(T)\geq R'(T-v)\geq 1$. If $R'(T)=1$, then $R'(T-v)=1$, and by the induction hypothesis, $T-v\cong K_{1, n-2}$. Let $u$ be the center of $T-v$.
We claim that $vu\in E(T)$. If this is not, then $v$ is adjacent to a leaf of $T-v$, say $x$, in $T$. Thus $d_T(x)=2$. However, since $R'(T)=R'(T-v)$ and $u\in N_T(x)$, by Theorem 2.1 that $d_T(x)>d_T(u)=n-2\geq 2$, a contradiction. This shows that $vu\in E(T)$ and $T\cong K_{1,n-1}$.
\end{proof}

\vspace{3mm}{\noindent \bf The proof of Theorem 1.4:}

Let $v_1, v_2, \ldots, v_n$ be an ordering of all vertices of $G$
such that $v_i$ is a minimum degree vertex
of $G_i=G-\{v_{i+1}, \ldots, v_n\}$ for each $i\in \{1, \ldots,
n\}$, where $G_n=G$ and $d_G(v_n)=\delta(G)$. It is well known that $deg(G)=\max\{d_{G_i}(v_i):\ 1\leq
i\leq n\}$ (see Theorem 12 in \cite{JT}). Let $k$ be the maximum number such that
$deg(G)=d_{G_k}(v_k)$, and $n_k$ the order of $G_k$. By Theorem 2.1,

\begin{equation} 2R'(G)\geq 2R'(G_{n-1})\geq \cdots\geq
2R'(G_k).\end{equation} Moreover,

\begin{eqnarray*}
2R'(G_k)&=& \sum\limits_{uv\in
E(G_k)}\frac{2}{\max\{d_{G_k}(u),d_{G_k}(v)\}}\\
&\geq& \sum\limits_{uv\in
E(G_k)}\frac{2}{\Delta(G_k)}\\
&\geq& \frac{\Delta(G_k)+(n_k-1)\delta(G_k)}{\Delta(G_k)} \\
&\geq& \delta(G_k)+1\\
&=& col(G).
\end{eqnarray*}

It follows that \begin{equation} \text {if}\ R'(G_k)=\delta(G_k)+1,
\text { then}\ G_k\cong K_k.\end{equation}

Now assume that $col(G)=2R'(G)$. Observe
that $col(G)=\max\{col(G_i):$ $G_i$ is a component of $G\}$ and $R'(G)=\sum_{i} R'(G_i)$. Thus, by the assumption that $col(G)=2R'(G)$ and $G$ has no isolated vertices, $G$ is connected and $col(G)\geq 2$. If $col(G)=2$, then $G$ is a tree. By Corollary 2.1, $G\cong K_{1,n-1}$.
Next we assume that $col(G)\geq 3$. By (4) and (5) we have $R'(G)=\cdots=R'(G_k)$,
$G_k\cong K_k$ and thus $col(G)=k$. We show $G\cong K_k\bullet K_{1,n-k}$ by induction on $n-k$.
It is easy to check that $col(K_n)=n+1=2R'(K_n)$ and thus the result is for $n-k=0$.
If $n-k=1$, then
$G_k=G-v_n$ and $R'(G_k)=R'(G)$, by Theorem 2.1,
$N_G(v_n)$ is an independent set of $G$. Combining with $G_k\cong K_k\ (k\geq 3)$,
$d_{G_n}(v_n)=1$. Thus $G=K_k\bullet K_{1,1}$.

Next assume that $n-k\geq 2$.
We consider $G_{n-1}$. Since $col(G_{n-1})=2R'(G_{n-1})$, by the induction hypothesis
$G_{n-1}\cong K_k\bullet K_{1,n-1-k}$. Without loss of generality, let $N_{G_{n-1}}(v_{k+1})=\cdots= N_{G_{n-1}}(v_{n-1})=\{v_k\}$. So, it remains to show that $N_G(v_n)=\{v_k\}$.

\vspace{3mm}\noindent {\bf Claim 1.} $N_G(v_n)\cap \{v_{k+1}, \ldots, v_{n-1}\}=\emptyset$.

If it is not, then $\{v_{k+1}, \ldots, v_{n-1}\}\subseteq N_G(v_n)$, because $d_G(v_n)=\delta(G)$.
By Theorem 2.1, $d_G(v_{n-1})>d_G(v_k)$. But, in this case, $d_G(v_{n-1})=2<d_G(v_k)$, a contradiction.

\vspace{3mm} By Claim 1, $N_G(v_n)\subseteq \{v_1, \ldots, v_k\}$.
Since $N_G(v_n)$ is an independent set and $\{v_1, \ldots, v_k\}$ is a clique of $G$, $|N_G(v_n)\cap \{v_1, \ldots, v_k\}|=1$.
If $N_G(v_n)=\{v_i\}$, where $i\neq k$, then $d_G(v_i)=k$. By Theorem 2.1, $d_G(v_i)>d_G(v_k)$.
However, $d_G(v_k)\geq n-2\geq k$, a contradiction. This shows $N_G(v_n)=\{v_k\}$ and hence $G\cong K_k\bullet K_{1,n-k}$.

It is straightforward to check that $$2R'(K_k\bullet K_{1,n-k})=2\times(\frac {k-2} 2+1)=k
=col(G).$$

\vspace{3mm}{\noindent \bf The proofs of Corollaries 1.5 and 1.6:}

\vspace{2mm} By (3) and Theorem 1.4, we have $\chi(G)\leq \chi_l(G)\leq 2R'(G)$. If $\chi(G)=2R'(G)$
(or $\chi_l(G)=2R'(G)$), then $col(G)=2R'(G)$. By Theorem 1.4, $G\cong K_k\bullet K_{1,n-k}$. On the other hand, it is easy to check that $$\chi(K_k\bullet K_{1,n-k})=\chi_l(K_k\bullet K_{1,n-k})=k=2R'(K_k\bullet K_{1,n-k}).$$

\vspace{3mm}{\noindent \bf The proof of Corollary 1.7:}

\vspace{2mm} By (1) and Theorem 1.4, we have $col(G)\leq 2H(G)$. If $col(G)=2H(G)$, then $col(G)=2R'(G)$.
By Theorem 1.4, $G\cong K_k\bullet K_{1,n-k}$. It can be checked that $k=2H(K_k\bullet K_{1,n-k})$ if and only if $k=n$, i.e., $G\cong K_n$.

\section{\large Spectral bounds}

\subsection{Definitions}

Let $\mu = \mu_1 \ge ... \ge \mu_n$ denote the eigenvalues of the adjacency matrix of $G$ and let $\pi, \nu$ and $\gamma$ denote the numbers
(counting multiplicities) of positive, negative and zero eigenvalues respectively. Then let

\[
s^+ = \sum_{i=1}^\pi \mu_i^2 \mbox{  and   } s^- = \sum_{i=n-\nu+1}^n \mu_i^2.
\]

Note that $\sum_{i=1}^n \mu_i^2 = s^+ + s^- = tr(A^2) = 2m$.

\subsection{Bounds for $\psi(G)$ and $col(G)$}

\begin{theorem}

For a graph G, $\psi(G) \le 2m/\sqrt{s^+} \le 2m/\mu \le 2R(G)$.

\end{theorem}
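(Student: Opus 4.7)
The theorem chains three inequalities $\psi(G) \le 2m/\sqrt{s^+} \le 2m/\mu \le 2R(G)$, which I would prove separately, in reverse order of difficulty. The middle inequality reduces to $\mu \le \sqrt{s^+}$, and is immediate from $s^+ = \sum_{i=1}^{\pi} \mu_i^2 \ge \mu_1^2 = \mu^2$. For the rightmost inequality, equivalent to $\mu \ge m/R(G)$, I would apply the Rayleigh quotient to the degree-weighted test vector $x \in \mathbb{R}^n$ defined by $x_v = \sqrt{d(v)}$. Since $x^\top x = \sum_v d(v) = 2m$ and $x^\top A x = 2 \sum_{uv \in E(G)} \sqrt{d(u)d(v)}$, this yields
\[
\mu \ \ge\ \frac{x^\top A x}{x^\top x}\ =\ \frac{1}{m}\sum_{uv \in E(G)} \sqrt{d(u)d(v)},
\]
and a single Cauchy--Schwarz step $\bigl(\sum_{uv \in E(G)} \sqrt{d(u)d(v)}\bigr)\bigl(\sum_{uv \in E(G)} 1/\sqrt{d(u)d(v)}\bigr) \ge m^2$ gives $\sum_{uv} \sqrt{d(u)d(v)} \ge m^2/R(G)$, hence $\mu \ge m/R(G)$.

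The leftmost inequality $\psi(G) \le 2m/\sqrt{s^+}$ is the genuinely new content. Setting $k = \psi(G)$, the plan is to combine two ingredients. The first is the elementary edge count $k(k-1) \le 2m$, which follows because a complete $k$-coloring must contain at least one distinct cross-edge for each of the $\binom{k}{2}$ pairs of color classes. The second is the spectral bound $s^+ \le 2m(k-1)/k$, which I would derive from the Ando--Lin theorem---the resolution of the Wocjan--Elphick conjecture---asserting that $\chi(G) \ge 1 + s^+/s^-$: rearranging using $s^+ + s^- = 2m$ yields $s^+ \le 2m(\chi-1)/\chi$, and the inequality $\chi(G) \le \psi(G)$ promotes this to $s^+ \le 2m(k-1)/k$. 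Multiplying the two bounds gives
\[
k^2 s^+ \ \le\ 2m \cdot k(k-1) \ \le\ (2m)^2,
\]
and taking square roots delivers $\psi(G) \sqrt{s^+} \le 2m$, as required.

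The main obstacle is therefore the Ando--Lin spectral input; once that theorem is invoked, the argument collapses to the two-line arithmetic above. Both sub-bounds are simultaneously tight for $K_n$, since $k(k-1) = n(n-1) = 2m$ and $s^+ = (n-1)^2 = 2m(k-1)/k$, so the plan automatically recovers $\psi(K_n) = n = 2m/\sqrt{s^+}$ as the canonical equality case. A more delicate question, which I would flag but not attempt to resolve here, is the full extremal characterization, since this requires identifying exactly when Ando--Lin and the edge-count bound are simultaneously tight.
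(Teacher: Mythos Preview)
Your proof is correct and mirrors the paper's argument exactly for the leftmost inequality (Ando--Lin combined with the edge count $\psi(\psi-1)\le 2m$), and the middle inequality is handled identically. The only difference is that you supply an inline Rayleigh-quotient/Cauchy--Schwarz proof of $\mu \ge m/R(G)$, whereas the paper simply cites this bound as a result of Favaron, Mah\'eo and Sacl\'e.
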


\begin{proof}

Ando and Lin \cite{ando} proved a conjecture due to Wocjan and Elphick \cite{wocjan} that $1 + s^+/s^- \le \chi$ and consequently $s^+ \le 2m(\chi -
1)/\chi$. It is clear that $\psi(\psi - 1) \le 2m$. Therefore:

\[
s^+ \le \frac{2m(\chi - 1)}{\chi} \le \frac{2m(\psi -
1)}{\psi} \le \frac{4m^2}{\psi^2}.
\]

Taking square roots and re-arranging completes the first half of the
proof.

Favaron \emph{et al} \cite{favaron93} proved that $R(G) \ge m/\mu$.
Therefore $\psi(G) \le 2m/\mu \le 2R(G)$.

\end{proof}

Note that for regular graphs, $2m/\mu = 2R' = 2H = 2R = n$ whereas for almost all regular graphs $2m/\sqrt{s^+} < n$.

\begin{lemma}
For all graphs, $col(col - 1) \le 2m$.

\end{lemma}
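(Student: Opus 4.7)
The plan is to leverage identity (2) from the introduction, namely $col(G) = deg(G)+1$ where $deg(G) = \max\{\delta(H) : H \subseteq G\}$. This is the only structural fact I will need.

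First, I would pick a subgraph $H \subseteq G$ that witnesses the degeneracy, i.e.\ $\delta(H) = deg(G) = col(G) - 1$. Since every vertex of $H$ has degree at least $col(G)-1$ inside $H$, and a vertex's degree in $H$ cannot exceed $|V(H)|-1$, we obtain $|V(H)| \ge col(G)$.

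Next I would double-count edges in $H$ via the handshake lemma:
\[
2\,m(H) \;=\; \sum_{v \in V(H)} d_H(v) \;\ge\; |V(H)| \cdot \delta(H) \;\ge\; col(G)\cdot(col(G)-1).
\]
Since $H$ is a subgraph of $G$, $m(H) \le m$, so dividing by $2$ and rearranging yields $col(G)(col(G)-1) \le 2m$, as required.

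There is no real obstacle here; the only thing to watch is making sure the witness subgraph $H$ is chosen so that $\delta(H)$ attains the maximum in the definition of $deg(G)$, and recognising that a minimum degree of $col(G)-1$ forces $H$ to contain at least $col(G)$ vertices. Everything else is an application of the degree-sum formula.
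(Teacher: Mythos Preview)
Your argument is correct. The step ``dividing by $2$ and rearranging'' is slightly misphrased --- what you actually have is $col(G)(col(G)-1)\le 2m(H)\le 2m$, no division needed --- but the logic is sound.

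Your route is genuinely different from the paper's. The paper proves the lemma spectrally: it first invokes the Ando--Lin bound $s^+\le 2m(\chi-1)/\chi$ to get $\sqrt{s^+}(\sqrt{s^+}+1)\le 2m$, then shows $deg(G)\le \mu(G)$ (using that $\delta(H)\le \mu(H)\le \mu(G)$ for every subgraph $H$) so that $col(G)\le \mu+1$, and finally chains $col(col-1)\le \mu(\mu+1)\le \sqrt{s^+}(\sqrt{s^+}+1)\le 2m$. Your handshake argument in a degeneracy-witnessing subgraph is far more elementary and self-contained, and would be the natural proof if the lemma were an isolated combinatorial fact. What the paper's spectral detour buys is the intermediate inequalities $col\le \mu+1$ and $\sqrt{s^+}+s^+\le 2m$, which are of independent interest in Section~3.3 and align with the spectral theme of the surrounding theorems; your proof gives no spectral information but establishes the stated lemma cleanly.
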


\begin{proof}

As noted above, $s^+ \le 2m(\chi - 1)/\chi$ and $\chi(\chi - 1) \le 2m$.
Therefore:

\[
\sqrt{s^+}(\sqrt{s^+} + 1) = s^+ + \sqrt{s^+} \le \frac{2m(\chi - 1)}{\chi} +
\frac{2m}{\chi} = 2m.
\]

We can show that $deg(G) \le \mu(G)$ as follows.

\[
deg(G) = \max(\delta(H) : H \subseteq G) \le \max(\mu(H) : H
\subseteq G) \le \mu(G).
\]

Therefore $col(G) \le \mu + 1$, so:

\[
col(col - 1) \le \mu(\mu + 1) \le \sqrt{s^+}(\sqrt{s^+} + 1) \le 2m.
\]

\end{proof}

We can now prove the following theorem, using the same proof as for
Theorem 3.1.

\begin{theorem}

For a graph G, $col(G) \le 2m/\sqrt{s^+} \le 2m/\mu \le 2R(G)$.
\end{theorem}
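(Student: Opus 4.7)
The plan is to mimic the proof of Theorem 3.1, replacing the bound $\psi(\psi-1)\le 2m$ with Lemma 3.2, which states that $col(col-1)\le 2m$. The key ingredients are all already available, so this becomes essentially a chain of substitutions.

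First I would invoke the Ando--Lin inequality $1 + s^+/s^- \le \chi$, which rearranges to $s^+ \le 2m(\chi-1)/\chi$, exactly as used in the proof of Theorem 3.1. Next I would use the standard fact $\chi(G)\le col(G)$ (inequality (3) in the paper) to upgrade this to
\[
s^+ \;\le\; \frac{2m(\chi-1)}{\chi} \;\le\; \frac{2m(col-1)}{col},
\]
since the function $x\mapsto (x-1)/x$ is increasing for $x>0$.

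Then I would apply Lemma 3.2 in the form $col-1 \le 2m/col$ to obtain
\[
s^+ \;\le\; \frac{2m(col-1)}{col} \;\le\; \frac{4m^2}{col^2}.
\]
Taking square roots and rearranging yields $col(G) \le 2m/\sqrt{s^+}$, which is the first inequality of the theorem. The second inequality $2m/\sqrt{s^+} \le 2m/\mu$ follows instantly from $\mu^2 = \mu_1^2 \le s^+$, and the third inequality $2m/\mu \le 2R(G)$ is precisely the Favaron \emph{et al.} bound $R(G)\ge m/\mu$ cited in the proof of Theorem 3.1.

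There is no real obstacle here: the substantive content has been isolated into Lemma 3.2 (which bounds $col(col-1)$ by $2m$ via the chain $col\le \mu+1$ combined with $\sqrt{s^+}(\sqrt{s^+}+1)\le 2m$), and once that lemma is in hand the proof is a verbatim reenactment of the Theorem 3.1 argument with $\psi$ replaced by $col$. The only point that warrants a moment of care is checking that the monotonicity $\chi\le col \Rightarrow (\chi-1)/\chi \le (col-1)/col$ is being applied in the correct direction to preserve the inequality $s^+ \le \cdots$, which it is.
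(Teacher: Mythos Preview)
Your proposal is correct and matches the paper's own approach exactly: the paper states that Theorem 3.3 is proved ``using the same proof as for Theorem 3.1,'' which amounts precisely to replacing $\psi$ by $col$, invoking $\chi\le col$ for the monotonicity step, and substituting Lemma 3.2 for the bound $\psi(\psi-1)\le 2m$. Your treatment of the middle inequality $2m/\sqrt{s^+}\le 2m/\mu$ via $\mu^2\le s^+$ and of the final inequality via Favaron \emph{et al.} is also identical to the paper's reasoning.
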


\subsection{Bounds for $s^+$}

The proof of Lemma 3.2 uses that $s^+ + \sqrt{s^+} \le 2m$, from which it follows that:

\[
\sqrt{s^+} \le \frac{1}{2}(\sqrt{8m + 1} - 1).
\]

This strengthens Stanley's inequality \cite{stanley} that:

\[
\mu \le \frac{1}{2}(\sqrt{8m + 1} - 1).
\]

However, $\sqrt{2m - n + 1} \le (\sqrt{8m + 1} - 1)/2$, and Hong \cite{hong} proved for graphs with no isolated vertices that $\mu \le \sqrt{2m - n + 1}.$ Elphick \emph{et al} \cite{elphick}  recently conjectured that for connected graphs $s^+ \le 2m - n + 1$, or equivalently that $s^- \ge n - 1$.

\section{\large Example and Conjectures}

A {\it Grundy $k$-coloring} of $G$ is a
$k$-coloring of $G$ such that each vertex is colored by the smallest
integer which has not appeared as a color of any of its neighbors.
The {\it Grundy number} $\Gamma(G)$ is the largest integer $k$, for
which there exists a Grundy $k$-coloring for $G$. It is clear that
for any graph $G$,  \begin{equation}  \Gamma(G)\leq \psi(G) \ \text{and}\  \chi(G)\leq \Gamma(G)\leq
\Delta(G)+1. \end{equation}

Note that each pair of $col(G)$ and $\psi(G)$, or $col(G)$ and $\Gamma(G)$, or $\psi(G)$ and $\Delta(G)$ is incomparable in general.

As an example of the bounds discussed in this paper, if $G=P_4$, then
$$\chi(G)=col(G)=2$$ $$\Gamma(G)=\psi(G)=\Delta(G)+1=2R'(G)=3$$
$$2H(G) = 3.67 \ \text{and}\ 2R(G) = 3.83$$
$$\mu_1(G) = 1.618 \ \text{and}\ \mu_2 = 0.618$$
$$2m/\mu_1(G) = 3.71  \ \text{and}\  2m/\sqrt{s^+} = 3.46.$$

We believe the following conjectures to be true.
\begin{conjecture}
For any graph $G$, $\psi(G)\leq 2R'(G)$.
\end{conjecture}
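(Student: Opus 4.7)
The plan is to attempt an inductive argument in the same spirit as the proof of Theorem 1.4, coupling the monotonicity of $R'$ under deletion of a minimum-degree vertex (Theorem 2.1) with a monotonicity property of the achromatic number. First I would verify the elementary inequality $\psi(G)\leq \psi(G-v)+1$ for any vertex $v$: starting from a complete $\psi(G)$-coloring, if the restriction to $G-v$ is still complete we are done; otherwise, $v$ is either the only vertex of its color class (in which case we drop that color) or the unique witness joining its color to some other color $j$ in $G$ (in which case we merge $v$'s class with $V_j$). Either operation produces a complete coloring of $G-v$ on at least $\psi(G)-1$ classes.

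Now induct on $n=|V(G)|$ and pick $v$ to be a minimum-degree vertex of $G$. If $\psi(G)\leq\psi(G-v)$, then combining Theorem 2.1 with the inductive hypothesis gives $2R'(G)\geq 2R'(G-v)\geq \psi(G-v)\geq \psi(G)$, and we are done. The remaining, genuinely hard case is $\psi(G)=\psi(G-v)+1$, where we need the refined drop $R'(G)-R'(G-v)\geq \tfrac{1}{2}$. To attack this I would revisit the identity buried in the proof of Theorem 2.1,
\[
R'(G)-R'(G-v)=\sum_{i=1}^{d(v)}\frac{d_i-1-(a_i+b_i)}{d_i(d_i-1)},
\]
and try to translate the rigid structural constraint on $v$ imposed by $\psi(G)=\psi(G-v)+1$ (namely that $v$ is indispensable for witnessing adjacency between its color class and some other one) into a lower bound of $\tfrac{1}{2}$ on this sum.

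If the inductive slack in the critical case proves too stubborn, the fallback plan is to imitate Tang \emph{et al.}'s proof of $\psi(G)\leq 2R(G)$ directly. Fix a complete $\psi(G)=k$ coloring with classes $V_1,\ldots,V_k$; for each pair $i<j$ choose an edge $e_{ij}$ between $V_i$ and $V_j$ whose endpoints $u_{ij}\in V_i$ and $u_{ji}\in V_j$ are of smallest possible degree in $G$; then lower bound
\[
2R'(G)\ \geq\ \sum_{i<j}\frac{2}{\max\{d_G(u_{ij}),d_G(u_{ji})\}},
\]
and try to show the right-hand side is at least $k$. Since this is a sum of $\binom{k}{2}$ terms, the required average bound is $\max\{d_G(u_{ij}),d_G(u_{ji})\}\leq k-1$ on average, which would follow from a clever averaging over the choice of representatives if the complete coloring is chosen to minimize representative degrees.

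The main obstacle, and the reason this is left as a conjecture, is precisely the tension between $R'$ and high-degree vertices: a single high-degree vertex appearing as a representative in many of the edges $e_{ij}$ dilutes every contribution $1/\max(\cdot,\cdot)$ that it touches, and unlike for $R$ or $H$ there is no AM--GM type slack to absorb this dilution. Any successful proof will have to show that a complete $\psi$-coloring can be refined so that the representatives $u_{ij}$ have degree at most roughly $k-1$ on average, or else to sharpen the inductive step above to recover the missing $\tfrac{1}{2}$ in the case $\psi(G)=\psi(G-v)+1$ by exploiting the structure that this equality forces on the neighborhood of the minimum-degree vertex.
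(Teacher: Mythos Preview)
The statement you are addressing is Conjecture~4.1, which the paper explicitly leaves open; there is no proof in the paper to compare your attempt against. Your submission is likewise not a proof but a sketch of two natural strategies together with an honest diagnosis of where each stalls, so on that level it matches the paper's status for this statement.

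A few comments on the content. For the inductive route, your reduction is sound up to the critical case $\psi(G)=\psi(G-v)+1$, but the needed inequality $R'(G)-R'(G-v)\ge\tfrac12$ is genuinely false in general: Theorem~2.1 shows the drop can be exactly zero (whenever $N_G(v)$ is independent and each $v_i$ dominates all its other neighbors in degree), and the structural information ``$v$ is an indispensable witness for some color pair'' is far too weak to force half a unit of $R'$ out of the formula $\sum_i (d_i-1-a_i-b_i)/(d_i(d_i-1))$. So this is not merely a missing lemma but an actual obstruction to the scheme as stated.

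For the Tang \emph{et al.}\ style argument, the obstacle you name is precisely the one that makes $R'$ harder than $R$: replacing $\sqrt{d(u)d(v)}$ by $\max\{d(u),d(v)\}$ destroys the AM--GM slack that absorbs high-degree representatives, and no averaging over witness choices is known to recover it. In short, your write-up is an accurate survey of why this remains a conjecture rather than a proof of it.
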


In view of (6), a more tractable conjecture than the above is as follows.

\begin{conjecture}
For any graph $G$, $\Gamma(G)\leq 2R'(G)$.
\end{conjecture}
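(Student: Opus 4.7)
The plan is to mimic the strategy used for Theorem 1.4 and Corollary 1.5, proceeding by induction on $n=|V(G)|$ after first imposing the natural hypothesis that $G$ has no isolated vertices (otherwise the bound already fails on $K_1$, since $\Gamma(K_1)=1>0=2R'(K_1)$). The base case $n=2$ forces $G=K_2$ and gives $\Gamma(G)=2R'(G)=2$. For the inductive step, fix a Grundy $k$-coloring of $G$ with $k=\Gamma(G)\geq 2$; the key structural input is that any vertex $v_k$ coloured $k$ has degree at least $k-1$, and its neighbourhood realises each of the colours $1,2,\ldots,k-1$.

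I would then delete a carefully chosen vertex $v$, hoping for two properties at once: (i) $\Gamma(G-v)\geq k-1$, so that the induction hypothesis gives $2R'(G-v)\geq k-1$; and (ii) $R'(G)-R'(G-v)\geq 1/2$. Property (i) follows from the general monotonicity $\Gamma(G-v)\geq \Gamma(G)-1$, which can be verified by extending a Grundy coloring of $G-v$ to $G$ via the greedy rule at $v$. The natural candidate for $v$ is a minimum-degree vertex, so that Theorem 2.1 already guarantees $R'(G)\geq R'(G-v)$ on the nose.

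The main obstacle is strengthening Theorem 2.1 to deliver (ii): a strict gain of at least $1/2$ in $R'$ for some admissible $v$. Theorem 2.1 only gives nonnegativity, with equality iff $N_G(v)$ is independent and each second-neighbour has strictly smaller degree; if this equality persists for every minimum-degree vertex met during the induction, the graph inherits a rigid pendant-plus-clique structure in the spirit of $K_k\bullet K_{1,n-k}$, for which the inequality can be checked directly (yielding the conjectured equality case). The genuinely hard case is when some minimum-degree vertex yields a strict but small gain in $R'$; here one would need to exploit the Grundy-witness tree rooted at $v_k$ (each vertex of colour $i$ has a neighbour of every colour $<i$) to harvest enough additional $R'$-mass, perhaps by routing deletions along this witness rather than always choosing the global minimum-degree vertex. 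An alternative route is to use Zaker's $k$-atom characterisation of $\Gamma(G)\geq k$ to isolate a subgraph and directly lower-bound $R'(G)$ along it, although this faces the complication that $R'$ depends on degrees in $G$ rather than in the atom.
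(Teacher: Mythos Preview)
The statement you are attempting is Conjecture 4.2 in the paper, and the paper gives \emph{no} proof of it; it is presented as an open problem, motivated by the established inequality $\Gamma(G)\le \psi(G)$ and Conjecture 4.1. There is therefore nothing to compare your attempt against: the authors do not claim to know how to prove $\Gamma(G)\le 2R'(G)$, and your write-up is a proof \emph{sketch} rather than a proof, as you yourself make clear when you call property~(ii) ``the main obstacle'' and leave it unresolved.

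Beyond the acknowledged gap in~(ii), your justification of~(i) is logically inverted. Extending a Grundy colouring of $G-v$ to $G$ by colouring $v$ greedily does yield a Grundy colouring of $G$, but this only shows $\Gamma(G)\ge(\text{colours used})$, and the right-hand side is at most $\Gamma(G-v)+1$; you obtain a lower bound on $\Gamma(G)$, not the needed upper bound $\Gamma(G)\le \Gamma(G-v)+1$. The inequality $\Gamma(G-v)\ge \Gamma(G)-1$ for an \emph{arbitrary} vertex $v$ is not a triviality and is in general false (restricting a Grundy ordering to $G-v$ can drop the greedy colour count by more than one, since many vertices may lose their unique low-coloured predecessor through $v$). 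If you want to salvage the induction you would at minimum need to choose $v$ so that some witness to $\Gamma(G)=k$ survives with Grundy value at least $k-1$ in $G-v$, e.g.\ by deleting a leaf of a Zaker $k$-atom rather than a global minimum-degree vertex; but then Theorem~2.1 no longer applies directly, and you are back to the tension you identify between the degree-based control of $R'$ and the Grundy-witness structure. In short, both (i) and (ii) are open, and the paper does not resolve them either.
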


\def\refname{\hfil References}

\end{document}